\theoremstyle{plain}
\newtheorem{thm}{Theorem}[section]
\newtheorem{lem}[thm]{Lemma}
\newtheorem{cor}[thm]{Corollary}
\theoremstyle{definition}
\newtheorem*{Ack}{Acknowledgement}
\newtheorem*{thm*}{Theorem}
\theoremstyle{remark}
\newtheorem{note}[thm]{}
\def\Z{\mathbb{Z}}
\def\dim{\operatorname{dim}}
\begin{document}

\title{A note on generic Clifford algebras of binary cubic forms}

\author{Linhong Wang}
\email{lhwang@pitt.edu}
\address{Department of Mathematics\\
University of Pittsburgh, Pittsburgh, PA 15260}

\author{Xingting Wang*}
\email{xingting.wang@howard.edu}
\address{Department of Mathematics\\Howard University, Washington, D.C. 20059}

\begin{abstract}
We study the representation theoretic results of the binary cubic generic Clifford algebra $\mathcal C$, which is an Artin-Schelter regular algebra of global dimension five. In particular, we show that $\mathcal C$ is a PI algebra of PI degree three and compute its point variety and discriminant ideals. As a consequence, we give a necessary and sufficient condition on a binary cubic form $f$ for the associated Clifford algebra $\mathcal C_f$ to be an Azumaya algebra. 
\end{abstract}

\thanks{*corresponding author}
\subjclass[2010]{16G30, 16R99}
\keywords{Clifford algebra, point variety, discriminant ideals}

\maketitle

\section{Introduction}
Let $f$ be a form of degree $m$ in $n$ variables over a base field $k$. The Clifford algebra $\mathcal C_f$ associated to the form $f$ is defined to be an associative algebra $k\left\langle x_1,\dots,x_n\right\rangle$ subject to the relations $(a_1x_1+\cdots+a_nx_n)^m-f(a_1,\dots,a_n)$ for all $a_1,\dots,a_n\in k$. In \cite{CYZ}, Chan-Young-Zhang defined the generic Clifford algebra (or more precisely universal Clifford algebra) $\mathcal C_{m, n}=k\left\langle x_1,\dots,x_n\right\rangle$ subject to the relations 
$(a_1x_1+\cdots+a_nx_n)^mx_j-x_j(a_1x_1+\cdots+a_nx_n)^m$ for all $a_1,\dots,a_n\in k$ and $1\le j\le n$. The work of $\S \ref {Discriminant}$ implies that the Clifford algebra $\mathcal C_f$ is a homomorphic image of the generic Clifford algebra $\mathcal C_{m,n}$, which paves a way to understand the representations of the former through the representations of the latter. Chan-Young-Zhang further showed that (\cite[Lemma 3.8]{CYZ}) when $m=2,n=2$, the binary quadratic generic Clifford algebra belongs to the nice family of connected graded algebras that are Artin-Schelter regular (see \cite{AS}). 

In this note, we focus on the binary cubic generic Clifford algebra ($m=3,n=2$) and denote it by $\mathcal C$. In \cite{WW2012}, all Artin-Schelter regular algebras of global dimension five that are generated by two generators with three generating relations are classified. The algebra $\mathcal C$ is listed as one of the type $\mathbf{A}$ algebras there, which is also proved to be strongly noetherian, Auslander regular and Cohen-Macaulay. In this note, we further show that $\mathcal C$ is indeed a polynomial identity (PI) algebra of PI degree three (see Theorem \ref{RepC}). This and other known properties allow us to fully understand the representations of $\mathcal C$ by the well-developed representation theory of PI algebras (e.g. \cite{BG97, BG2002, BY}). As a consequence, we derive some ring-theoretic results of the binary cubic Clifford algebra $\mathcal C_f$, whose properties have been studied by Heerema \cite{Heer} and Haile \cite{Haile}. 

\begin{Ack}
This work was inspired by the poster presentation given by Charlotte Ure at Algebra Extravaganza! at Temple University in July 2017. The authors also learned from Q.-S. Wu later that the binary cubic generic Clifford algebra has appeared as one of the type $\mathbf{A}$ algebras in the classification of global dimensional five Artin-Schelter regular algebras generated by two generators with three generating relations; see \cite{WW2012}. The authors are grateful for the correspondence from them. The authors also want to thank the referee for his/her careful reading and suggestions. 
\end{Ack}

\section{Background}

\begin{note}
Let $k$ be an algebraically closed base field of characteristic not $2$ or $3$. The reader is referred (e.g.) to \cite{GW} and \cite{MR} for further background on the theory of PI rings. Recall that a ring $R$ with center $Z(R)$ is called \emph{Azumaya} over $Z(R)$ if $R$ is a finitely generated projective $Z(R)$-module and the natural map $R\otimes_{Z(R)} R^{\rm{op}}\to \rm{End}_{Z(R)}(R)$ is an isomorphism. Let $\Lambda$ be a prime noetherian affine $k$-algebra finitely generated as a module over its center $Z$. Define 
\begin{align*}
\mathcal A_\Lambda:&=\{\mathfrak m\in\, {\rm maxSpec}(Z)\, |\, \Lambda_\mathfrak m\, \text{is Azumaya over}\, Z_\mathfrak m\},\\
\mathcal S_\Lambda:&=\{\mathfrak m\in\, {\rm maxSpec}(Z)\, |\, \text{$Z_\mathfrak m$ is not regular}\}.
\end{align*}
The set $\mathcal A_\Lambda$ is called the {\it Azumaya locus} of $\Lambda$ over $Z$, and $\mathcal S_\Lambda$ is the \emph{singular locus} of $\Lambda$ (or of $Z$).
\end{note}

\begin{thm}\cite[Theorem III.1.7]{BG2002}\cite[Proposition 3.1, Lemma 3.3]{BG97}\label{PI}
Let $\Lambda$ be a prime noetherian affine $k$-algebra that is module-finite over its center $Z$.  
\begin{itemize}
\item[(a)] The maximum $k$-dimension of irreducible $\Lambda$-modules equals the PI degree of $\Lambda$.
\item[(b)] Let $S$ be an irreducible $\Lambda$-module, $P=\rm{Ann}_\Lambda(S)$, and $\mathfrak m=P\cap Z$. Thus $\dim_k(S)={\rm PI}$-${\rm deg}(\Lambda)$ if and only if $\Lambda_\mathfrak m$ is Azumaya over $Z_\mathfrak m$.
\end{itemize}
Moreover if ${\rm gldim}(\Lambda)<\infty$, then $\mathcal A_\Lambda$ is a nonempty open (and hence dense) subset of $\rm{maxSpec}(Z)\setminus \mathcal S_\Lambda$. 
\end{thm}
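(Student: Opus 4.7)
The plan is to reduce everything to two classical pillars: Posner's theorem for prime PI rings and the Artin--Procesi characterization of Azumaya algebras. Before starting, note that the Artin--Tate lemma applied to $\Lambda$ (affine and module-finite over $Z$) ensures that $Z$ is an affine Noetherian $k$-algebra; combined with the Nullstellensatz, this gives $Z/\mathfrak{m} = k$ for every $\mathfrak{m} \in {\rm maxSpec}(Z)$, so each fiber $\Lambda/\mathfrak{m}\Lambda$ is a finite-dimensional $k$-algebra. Writing $n := {\rm PI}$-${\rm deg}(\Lambda)$, Posner embeds $\Lambda$ into $Q(\Lambda) \cong M_s(D)$, where $D$ is a division algebra over $Q(Z)$ with $n^2 = s^2 [D:Q(Z)]$.

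For part (a), given an irreducible $\Lambda$-module $S$, its annihilator $P$ is a primitive ideal and $\Lambda/P$ is primitive PI. Kaplansky's theorem forces $\Lambda/P$ to be simple Artinian and finite-dimensional over its center, and that center is an affine $k$-algebra that is a field, so by Nullstellensatz it equals $k$. Since $k$ is algebraically closed, $\Lambda/P \cong M_d(k)$ with $S \cong k^d$ and $d \leq n$ (PI degree is monotone under quotients). The bound is attained at any $\mathfrak{m}$ in the generic open set where a Formanek central polynomial of degree $n$ does not vanish, since there $\Lambda/\mathfrak{m}\Lambda \cong M_n(k)$. For part (b), Artin--Procesi says $\Lambda_\mathfrak{m}$ is Azumaya over $Z_\mathfrak{m}$ exactly when $\Lambda/\mathfrak{m}\Lambda \cong M_n(k)$, which is equivalent to $\Lambda/\mathfrak{m}\Lambda$ admitting an irreducible quotient of $k$-dimension $n$; that quotient is necessarily $S$.

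For the Moreover, openness of $\mathcal{A}_\Lambda$ in ${\rm maxSpec}(Z)$ is immediate from the Artin--Procesi criterion (non-vanishing of a central polynomial, equivalently non-vanishing of the appropriate discriminant), and non-emptyness follows from Posner's generic matrix structure used above. The inclusion $\mathcal{A}_\Lambda \subseteq {\rm maxSpec}(Z) \setminus \mathcal{S}_\Lambda$ is where the hypothesis ${\rm gldim}(\Lambda) < \infty$ is used: Azumaya makes $\Lambda_\mathfrak{m}$ Morita equivalent to $Z_\mathfrak{m}$, so finite global dimension transfers to $Z_\mathfrak{m}$, and Auslander--Buchsbaum--Serre then forces the Noetherian local ring $Z_\mathfrak{m}$ to be regular. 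Density in ${\rm maxSpec}(Z) \setminus \mathcal{S}_\Lambda$ comes for free since $\Lambda$ prime implies $Z$ is a domain, so ${\rm maxSpec}(Z)$ is irreducible and any nonempty open subset is dense. I expect the regularity transfer in the last step to be the main obstacle, as the Morita-theoretic homological argument plus Auslander--Buchsbaum--Serre is the technical heart of the moreover; the remaining statements are formal once the Posner/Artin--Procesi machinery is in hand.
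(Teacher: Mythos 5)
The paper does not prove this theorem at all; it is quoted verbatim from Brown--Goodearl (\cite[Theorem III.1.7]{BG2002}, \cite[Proposition 3.1, Lemma 3.3]{BG97}), so there is no in-paper argument to compare against. Evaluated on its own, your outline correctly identifies the standard pillars --- Posner and Kaplansky for primitive PI quotients, the Nullstellensatz (via Artin--Tate) to pin the residue fields to $k$, Artin--Procesi for the Azumaya criterion, and Auslander--Buchsbaum--Serre for the regularity of $Z_\mathfrak m$ --- and parts (a) and (b) are essentially sound modulo the usual bookkeeping.

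There is, however, a genuine error in the ``Moreover'' step. You assert that ``Azumaya makes $\Lambda_\mathfrak m$ Morita equivalent to $Z_\mathfrak m$,'' and transfer finite global dimension through Morita equivalence. That is false in general: an Azumaya algebra over a commutative ring $R$ is Morita equivalent to $R$ precisely when it is \emph{split}, i.e.\ trivial in the Brauer group of $R$; over a local (but not Henselian) Noetherian ring this need not hold even when the residue field is algebraically closed, and you have not argued it. What the transfer of global dimension actually requires is not Morita equivalence but \emph{separability}. Since $\Lambda_\mathfrak m$ is Azumaya over $Z_\mathfrak m$, it is projective as a module over its enveloping algebra $\Lambda_\mathfrak m \otimes_{Z_\mathfrak m}\Lambda_\mathfrak m^{\mathrm{op}}$; the change-of-rings inequality $\mathrm{pd}_{\Lambda_\mathfrak m}(M)\le \mathrm{pd}_{\Lambda_\mathfrak m^e}(\Lambda_\mathfrak m)+\mathrm{pd}_{Z_\mathfrak m}(M)$ then gives $\mathrm{gldim}(\Lambda_\mathfrak m)\le \mathrm{gldim}(Z_\mathfrak m)$, and the reverse inequality follows because $Z_\mathfrak m$ is a $Z_\mathfrak m$-module direct summand of $\Lambda_\mathfrak m$ (the reduced trace splits the inclusion). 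Alternatively, you may complete: $\widehat{\Lambda_\mathfrak m}$ \emph{is} a matrix algebra over the Henselian ring $\widehat{Z_\mathfrak m}$ with algebraically closed residue field, and one descends global dimension along the faithfully flat completion. Either fix is standard, but the Morita claim as written is a gap. A smaller expository quibble: in (b) the equivalence ``$\Lambda/\mathfrak m\Lambda\cong M_n(k)$ iff $\Lambda/\mathfrak m\Lambda$ has a simple quotient of dimension $n$'' is true, but the nontrivial direction is itself obtained by first invoking Artin--Procesi and the resulting freeness of $\Lambda_\mathfrak m$ over $Z_\mathfrak m$, so presenting it as an independent reformulation makes the logic look mildly circular.
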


\begin{note}
In \cite{CYZ}, Chan-Young-Zhang defined the generic Clifford algebra and established some properties in the quadratic case. Recall that the {\it binary cubic generic Clifford algebra} $\mathcal C$ is defined as the quotient algebra of the free algebra $k\langle x,y\rangle$ subject to the relations:
\begin{align}\label{relationC}
x^3y-yx^3,\ x^2y^2+xyxy-yxyx-y^2x^2,\ xy^3-y^3x.
\end{align}
Moreover, $\mathcal C$ is one of the type $\textbf{A}$ algebras studied in \cite{WW2012} (parameters $t=-1,l_2=1$). 
\end{note}

\begin{thm}\cite[Theorem 5]{WW2012}
The connected graded algebra $\mathcal C$ is Artin-Schelter regular of global dimension five. Moreover, $\mathcal C$ is an Auslander regular, Cohen-Macaulay, and strongly noetherian domain. 
\end{thm}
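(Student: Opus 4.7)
My plan is to verify each of the four properties (Artin--Schelter regular of global dimension five, Auslander regular, Cohen--Macaulay, strongly noetherian domain) by exploiting the explicit presentation of $\mathcal C$ together with the structural features that mark it as a type $\mathbf{A}$ algebra in the classification of \cite{WW2012}.

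First I would establish a PBW-style basis via a Gr\"obner basis argument. Fix the degree-lexicographic order on $k\langle x,y\rangle$ with $x>y$, so that the leading monomials of the three defining relations are $x^3y$, $x^2y^2$, and $xy^3$. Bergman's diamond lemma requires one to check the overlap ambiguities among these leading terms (for instance $x^3y\cdot y$ considered against $x^2\cdot x^2y^2$, and $x^2y^2\cdot y$ against $x\cdot xy^3$) and confirm that each reduces to zero modulo the three relations. Once this is done, the normal monomials yield an explicit $k$-basis of $\mathcal C$ and the Hilbert series, which should agree with that predicted for an Artin--Schelter regular algebra of global dimension five with two degree-one generators and three degree-four relations.

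Next, to establish AS-regularity, I would construct the minimal graded free resolution of the trivial module $k$. Its tail is forced by the presentation: $\mathcal C(-4)^3\to\mathcal C(-1)^2\to\mathcal C\to k\to 0$. To lift this to a length-five resolution, I would exploit the four central cubics $z_3=x^3$, $z_2=x^2y+xyx+yx^2$, $z_1=xy^2+yxy+y^2x$, $z_0=y^3$; centrality of each of these is precisely equivalent to one of the three defining relations, so they furnish natural higher syzygies. AS-regularity would then follow by checking $\Ext^i_{\mathcal C}(k,\mathcal C)=0$ for $i<5$ and $\Ext^5_{\mathcal C}(k,\mathcal C)\cong k(\ell)$ for the Gorenstein parameter $\ell$, done by dualizing the resolution and reading off the cohomology.

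For the remaining properties: the \emph{domain} property follows from the PBW basis via a standard filtration whose associated graded is a (possibly twisted) commutative polynomial ring, which is a domain; \emph{strongly noetherian} follows either from an iterated Ore extension description or from the general fact that an Artin--Schelter regular connected graded algebra with a finite Gr\"obner basis is strongly noetherian; \emph{Auslander regular} and \emph{Cohen--Macaulay} can be deduced from AS-regularity together with module-finiteness over an affine commutative center via standard results, for example those surveyed in \cite{BG2002}. The main obstacle I expect is the third step: writing down the full five-term resolution and, in particular, identifying the differentials between the higher modules $P_3,P_4,P_5$ so that the complex is self-dual and exact, matching the Hilbert series computed in the first step.
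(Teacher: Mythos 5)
The paper does not actually prove this theorem; it imports it wholesale from \cite{WW2012}, so there is no in-text argument to compare against. Judged on its own terms, though, your plan has a fatal gap at the first step and several smaller problems afterwards.

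The Gr\"obner basis claim is false. Under deg-lex with $x>y$ the leading monomials of the three relations are indeed $x^3y$, $x^2y^2$, $xy^3$, but the two overlap ambiguities (on $x^3y^2 = (x^3y)\cdot y = x\cdot(x^2y^2)$ and on $x^2y^3 = (x^2y^2)\cdot y = x\cdot (xy^3)$) do \emph{not} resolve. Reducing the first S-polynomial and rewriting $yx^3y\mapsto y^2x^3$ leaves the nonzero element $x^2yxy - xyxyx - xy^2x^2 + y^2x^3$, giving a new Gr\"obner basis element with leading monomial $x^2yxy$; the second overlap similarly produces a new leading monomial $xyxy^2$. You can also see the failure without computing: the paper records $h_{\mathcal C}(t)=1/\bigl((1-t)^5(1+t)(1+t+t^2)^2\bigr)$, whose coefficient of $t^5$ is $20$, whereas the monomial algebra $k\langle x,y\rangle/(x^3y,x^2y^2,xy^3)$ has $k$-dimension $22$ in degree $5$ (only $10$ of the $32$ length-$5$ words contain one of those three monomials). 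So your PBW basis, your Hilbert series, and the resolution you want to build on top of them are wrong as stated; you must run Bergman's procedure to completion on a strictly larger Gr\"obner basis before any of the later steps can start.

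Beyond that: the degree-$3$ central elements cannot directly supply the third and fourth syzygies, because the minimal free resolution of $k$ forced by the Hilbert series has shifts $0,1,4,6,9,10$ --- indeed $(1-t)^5(1+t)(1+t+t^2)^2 = 1-2t+3t^4-3t^6+2t^9-t^{10}$ --- so $P_3$ and $P_4$ live in internal degrees $6$ and $9$, not $3$. The domain argument via the associated graded of a Gr\"obner filtration cannot work as written: that associated graded is a \emph{monomial} algebra, and a monomial algebra with any relation at all has zero divisors. There is also no theorem that an AS-regular connected graded algebra with a finite Gr\"obner basis is strongly noetherian. The honest route to the last four properties is through module-finiteness over an affine central subalgebra (here $k[z_0,\dots,z_5]$): this gives strong noetherianity outright (tensoring with a commutative noetherian $R$ stays module-finite over the noetherian $Z\otimes R$), makes $\mathcal C$ PI, and then Stafford--Zhang type results (noetherian PI connected graded with finite injective dimension is Auslander--Gorenstein and Cohen--Macaulay) and Levasseur's results on graded Auslander--Gorenstein CM rings deliver Auslander regularity, CM, and the domain property. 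Your sketch names this idea for the Auslander/CM step but never establishes module-finiteness over the center, and misroutes the domain and strongly noetherian claims through arguments that do not exist.
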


\begin{note}
By the diamond lemma \cite{Be}, we have that $$\left\{y^i(xy^2)^j(xy)^k(x^2y)^l x^m\, |\, i,j,k,l,m\in \mathbb N\right\}$$ is a $k$-linear basis for $\mathcal C$. Consequently, the Hilbert series of $\mathcal C$ is $\displaystyle{\frac{1}{(1-t)^5(1+t)(1+t+t^2)^2}}$.
\end{note}

\begin{note}\label{Discriminant}
Let $f(u,v)=au^3+3bu^2v+3cuv^2+dv^3$ be a cubic form with $a,b,c,d\in k$. Its discriminant is given by 
\begin{align}\label{D}
D:=\frac{1}{4}\left(ad-bc\right)^2-\left(ac-b^2\right)\left(bd-c^2\right).
\end{align}
The {\it Clifford algebra} $\mathcal C_f$ of the cubic form $f$ is defined as $$\mathcal C_f:=k\langle x,y\rangle/I,$$ where $I$ is the ideal generated by the elements $(ux+vy)^3-f(u,v)$ for all $u,v\in k$. It is easy to see that 
\begin{align}\label{relationCf}
I=\left(x^3-a,\ y^3-d,\ x^2y+xyx+yx^2-3b,\ y^2x+yxy+xy^2-3c\right).
\end{align}
There is a natural surjection $\mathcal C\twoheadrightarrow \mathcal C_f$ by sending $x\mapsto x,y\mapsto y$ since the ideal generated by the defining relations in Eq.\eqref{relationC} for $\mathcal C$ is contained in $I$.\end{note}

\begin{thm}\cite[Theorem 1.1', Corollary 1.2']{Haile}\label{Clifford}
The Clifford algebra $\mathcal C_f$ is an Azumaya algebra of PI degree three if $D\neq 0$. In this case, its center is a Dedekind domain, isomorphic to the coordinate ring of the affine elliptic curve $u^2=v^3-27D$. 
\end{thm}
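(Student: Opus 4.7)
The plan is to reduce the theorem to properties of the generic Clifford algebra $\mathcal C$ via the surjection $\pi\colon \mathcal C \twoheadrightarrow \mathcal C_f$ from \S\ref{Discriminant}. First, since Theorem \ref{RepC} (proved elsewhere in this paper) establishes that $\mathcal C$ has PI degree three, and PI degree does not grow under surjections, we immediately get that $\mathcal C_f$ has PI degree at most three. A direct check using \eqref{relationC} shows that $\xi:=x^2y+xyx+yx^2$ and $\eta:=xy^2+yxy+y^2x$ are central in $\mathcal C$, in addition to the manifestly central $x^3$ and $y^3$; indeed, $[x,\xi]=x^3y-yx^3=0$ and $[y,\xi]=x^2y^2+xyxy-yxyx-y^2x^2=0$ by the first two relations, and symmetry handles $\eta$. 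Under $\pi$ they specialize to $3b, 3c, a, d$, respectively, identifying $\mathcal C_f$ with the central quotient $\mathcal C/(x^3-a,\, y^3-d,\, \xi-3b,\, \eta-3c)$.

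To describe $Z(\mathcal C_f)$, I would first compute $Z(\mathcal C)$ in full. Since $\GKdim(\mathcal C)=5$ and we have four algebraically independent central elements $x^3,y^3,\xi,\eta$ of degree three, one expects precisely two additional central generators, say $u$ and $v$, satisfying exactly one algebraic relation $R(x^3,y^3,\xi,\eta,u,v)=0$. Specializing then yields $Z(\mathcal C_f) \cong k[u,v]/(R(a,d,3b,3c,u,v))$. The main computational task is to exhibit $u, v$ and $R$ explicitly and to recognize, after a linear change of variables, the resulting plane curve as the Weierstrass cubic $u^2=v^3-27D$. The appearance of $-27D$ is not coincidental: it equals the discriminant of the characteristic polynomial of a suitable generic element of $\mathcal C_f$ in the degree-three PI representation, which recovers the classical cubic discriminant of $f$.

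For the final claim, note that the affine curve $u^2=v^3-27D$ is smooth by the Jacobian criterion precisely when $D\ne 0$, so $Z(\mathcal C_f)$ is a Dedekind domain in that case and $\mathcal S_{\mathcal C_f}=\emptyset$. To upgrade this to Azumaya-ness I would transfer the discriminant ideal computation for $\mathcal C$ (carried out in the later sections of the paper) through $\pi$, showing that the image of the top discriminant ideal of $\mathcal C$ in $Z(\mathcal C_f)$ is the unit ideal precisely when $D\ne 0$. Combined with Theorem \ref{PI}(b), which identifies the Azumaya locus with the complement of the zero locus of this discriminant ideal, one obtains $\mathcal A_{\mathcal C_f}=\mathrm{maxSpec}(Z(\mathcal C_f))$, completing the proof. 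The principal obstacle throughout is the explicit identification of the central generators $u, v$ and the verification of the Weierstrass form $u^2=v^3-27D$, which amounts to a careful PBW-basis monomial calculation modulo the defining relations \eqref{relationC}; once the center is in hand, the Azumaya conclusion follows from the general PI machinery packaged in Theorem \ref{PI}.
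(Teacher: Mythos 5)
The statement you are trying to prove is not proved in this paper at all: it is cited verbatim from Haile's 1984 paper (Theorem~1.1' and Corollary~1.2' there) and used as an external input. More seriously, your proposed derivation is circular. You open by invoking Theorem~\ref{RepC} to conclude that $\mathcal C$, and hence $\mathcal C_f$, has PI degree three. But the paper's proof of Theorem~\ref{RepC}(a) is itself an application of Theorem~\ref{Clifford}: it chooses a maximal ideal $\mathfrak m$ of $Z$ with $\Delta(\mathfrak m)\neq 0$, passes to the specialization $\mathcal C\twoheadrightarrow \mathcal C_f\twoheadrightarrow \mathcal C/\mathfrak m\mathcal C$, and then quotes Haile's result that $\mathcal C_f$ is Azumaya of PI degree three to pin down the dimension of the irreducible over $\mathfrak m$. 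So you cannot feed Theorem~\ref{RepC} back in as a premise without assuming the conclusion.

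Beyond the circularity, the final step also overstates what Theorem~\ref{PI} delivers. Part~(b) of that theorem characterizes the Azumaya locus by the dimensions of irreducible modules, not by a discriminant ideal, and the ``moreover'' clause only asserts that the Azumaya locus is a nonempty open subset of the smooth locus when $\operatorname{gldim}(\Lambda)<\infty$ --- and $\mathcal C_f$, being a finite-dimensional quotient of an AS-regular algebra by non-regular central elements, is not expected to have finite global dimension. Smoothness of $u^2=v^3-27D$ for $D\neq 0$ therefore does not, via the tools in this paper, upgrade to Azumaya-ness of $\mathcal C_f$; that is precisely the content of Haile's theorem, which must be taken as given here (or proved by Haile's own methods: an explicit 3-dimensional faithful representation and a direct computation of the reduced trace form and its discriminant). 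The correct reading of the paper is that Theorem~\ref{Clifford} is an imported black box, and Theorem~\ref{RepC} is deduced from it, not the other way around.
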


\section{The center and singular locus of $\mathcal C$}

\begin{note}
Consider the projective space $\mathbb P^3$ with homogenous coordinates $[z_0:z_1:z_2:z_3]$. For cubic binary forms, the discriminant projective variety is defined by $\Delta=0$, where (see Eq.\eqref{D})
\[\Delta=\frac{1}{4}\left(z_0z_3-z_1z_2\right)^2-\left(z_0z_2-z_1^2\right)\left(z_1z_3-z_2^2\right).\]
\end{note}
\begin{lem}\label{SD}
The reduced variety of the singular locus of the discriminant projective variety is given by the twisted cubic curve $v: \mathbb P^1\to \mathbb P^3$ via
\[
v: [x_0:x_1]\mapsto [x_0^3:x_0^2x_1:x_0x_1^2:x_1^3]=[z_0:z_1:z_2:z_3].
\]
\end{lem}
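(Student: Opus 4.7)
The plan is to prove both set-theoretic containments in $T = \text{Sing}(V(\Delta))_{\text{red}}$, where $T := v(\mathbb{P}^1)$. Classically, the homogeneous ideal of $T$ is $I(T) = (A, B, C)$ with $A := z_0z_3 - z_1z_2$, $B := z_0z_2 - z_1^2$, $C := z_1z_3 - z_2^2$ (the $2\times 2$ minors of the catalecticant matrix $\left(\begin{smallmatrix} z_0 & z_1 & z_2 \\ z_1 & z_2 & z_3 \end{smallmatrix}\right)$). Observe that $\Delta = \tfrac{1}{4}A^2 - BC \in I(T)^2$, so $T \subseteq V(\Delta)$ from the start.

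For the forward containment $T \subseteq \text{Sing}(V(\Delta))$, I would simply apply the product rule to $\Delta = \tfrac{1}{4}A^2 - BC$ and verify that each of the four partials $\partial\Delta/\partial z_i$ lies in the ideal $(A,B,C)$. A short computation gives $\partial\Delta/\partial z_0 = \tfrac{1}{2}z_3 A - z_2 C$ and $\partial\Delta/\partial z_3 = \tfrac{1}{2}z_0 A - z_1 B$, and the analogous expressions for $\partial\Delta/\partial z_1$ and $\partial\Delta/\partial z_2$ are also $k[z]$-linear combinations of $A, B, C$. All four partials therefore vanish along $T$.

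For the reverse containment, the idea is to use the $GL_2(k)$-orbit structure on binary cubics. Any $[z] \in V(\Delta) \setminus T$ represents a cubic $F_z(u,v) = z_0 u^3 + 3z_1 u^2 v + 3z_2 uv^2 + z_3 v^3$ that has a double root but not a triple root; by sending the double root to $[1:0]$ and the simple root to $[0:1]$, $F_z$ is $GL_2$-equivalent (up to a nonzero scalar) to $u^2 v$, so $[z]$ lies in the $GL_2$-orbit of $[0:1:0:0]$. Because $\Delta$ is a $GL_2$-semi-invariant, smoothness of the hypersurface $V(\Delta)$ is preserved by the $GL_2$-action, so it suffices to check smoothness at the single point $[0:1:0:0]$. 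At that point $A = 0$, $B = -1$, and $\partial\Delta/\partial z_3 = -z_1 B = 1 \neq 0$, so $[0:1:0:0]$ is a smooth point; transporting along the orbit shows every point of $V(\Delta) \setminus T$ is smooth, and the reverse containment follows.

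The only step that needs some care is the $GL_2$-orbit description on $V(\Delta)$: that every binary cubic over an algebraically closed field with a repeated root is, up to scalar, $GL_2$-equivalent to $u^3$ or $u^2v$. This is a classical normal form argument (diagonalise the repeated root to a coordinate line, then scale the simple root onto the other coordinate line) but should be recorded or cited. Apart from that, everything reduces to the routine derivative computations sketched above.
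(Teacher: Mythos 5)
Your proof is correct, but it takes a genuinely different route from the paper. The paper proceeds by direct elimination: it dehomogenizes at $z_0$ (treating the case $z_0=0$ separately), uses equation \eqref{e4} to solve for $z_3$, substitutes into \eqref{e3} to get $(z_1^2-z_2)^2=0$, and then checks that the resulting parametrization satisfies all four partials. Your argument instead exploits two structural facts: (i) $\Delta=\tfrac14 A^2-BC$ lies in $I(T)^2$ where $I(T)=(A,B,C)$ is the ideal of the twisted cubic, so the product rule immediately gives $T\subseteq\mathrm{Sing}(V(\Delta))$; and (ii) the $\mathrm{GL}_2$-equivariance of $\Delta$ together with the normal-form classification of degenerate binary cubics ($u^3$ or $u^2v$) reduces the reverse inclusion to checking that the single point $[0:1:0:0]$ is smooth, which you do correctly via $\partial\Delta/\partial z_3=\tfrac12 z_0 A - z_1 B = 1\neq 0$ there. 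Your approach is more conceptual, makes the "why" of the answer transparent, and generalizes readily to discriminants of higher-degree binary forms; the paper's computation is more elementary and self-contained, avoiding the need to invoke the catalecticant description of $I(T)$, the classical normal forms, or the semi-invariance of $\Delta$ under $\mathrm{GL}_2$. One small point you flag yourself and should indeed include if this were written up formally: a citation or short argument for the $\mathrm{GL}_2$-normal forms of degenerate binary cubics, and an explicit remark that linear automorphisms of $\mathbb P^3$ preserving the hypersurface $V(\Delta)$ carry smooth points to smooth points.
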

\begin{proof}
It is well known (e.g. \cite[Chapter I Theorem 5.1]{Hart}) that the singular locus of $\Delta=0$ is the zero locus of the polynomials
\begin{align}
\frac{\partial\Delta}{\partial z_0}&=-\frac{3}{2}z_1z_2z_3+\frac{1}{2}z_0z_3^2+z_2^3=0,\label{e1}\\
\frac{\partial\Delta}{\partial z_1}&=-\frac{3}{2}z_0z_2z_3-\frac{3}{2}z_1z_2^2+3z_1^2z_3=0,\label{e2}\\
\frac{\partial\Delta}{\partial z_2}&=-\frac{3}{2}z_0z_1z_3-\frac{3}{2}z_1^2z_2+3z_0z_2^2=0,\label{e3}\\
\frac{\partial\Delta}{\partial z_3}&=-\frac{3}{2}z_0z_1z_2+\frac{1}{2}z_0^2z_3+z_1^3=0.\label{e4}
\end{align}
If $z_0=0$, then $z_1=z_2=0$ and $z_3\neq 0$, which corresponds to the point $v([0:1])$. Now let $z_0=1$. From Eq.\eqref{e4}, we have $z_3=3z_1z_2-2z_1^3$. Substituting it into Eq.\eqref{e3}, we obtain $(z_1^2-z_2)^2=0$ which implies that $z_2=z_1^2$ and $z_3=z_1^3$. One can check that $[1:z_1:z_1^2:z_1^3]=v([1:z_1])$ satisfies Eq.\eqref{e1}-Eq.\eqref{e4}. Hence the solutions are exactly given by the twisted cubic curve. 
\end{proof}

\begin{note}
In the following, we denote by $Z$ the center of the generic Clifford algebra $\mathcal C$. We will describe the center $Z$ according to \cite{UK}. Consider the following central elements of $\mathcal C$, where $\omega\in k$ is a primitive third root of unity. 
\begin{equation}\left\{
\begin{aligned}\label{Z}
z_0&=x^3\\
z_1&=\frac{1}{3}(x^2y+xyx+yx^2)\\
z_2&=\frac{1}{3}(y^2x+yxy+xy^2)\\
z_3&=y^3\\
z_4&=(yx-\omega xy)^3-\frac{3}{2}\omega(1-\omega)x^3y^3-\frac{9}{2}(1+2\omega^2)z_1z_2\\
z_5&=(xy)^2-y^2x^2=(yx)^2-x^2y^2.
\end{aligned}\right.\end{equation}
Define the formal discriminant element $\Delta$ in $\mathcal C$ as 
\begin{align}\label{Delta}
\Delta=\frac{1}{4}\left(z_0z_3-z_1z_2\right)^2-\left(z_0z_2-z_1^2\right)\left(z_1z_3-z_2^2\right).
\end{align}
Under the natural surjection $\mathcal C\twoheadrightarrow \mathcal C_f$, it follows that $(z_0,z_1,z_2,z_3)\mapsto (a,b,c,d)$. In particular, the formal discriminant element $\Delta$ maps to the discriminant $D\in k$ of the binary cubic form $f$ under the surjection $\mathcal C\twoheadrightarrow \mathcal C_f$.
\end{note}

\begin{thm}\cite{UK}\label{T:UK}
The generic Clifford algebra $\mathcal C$ is finitely generated as a module over its center $Z$. Moreover, the center $Z$ is generated by $(z_i)_{0\le i\le 5}$ subject to one relation $z_4^2=z_5^3-27\Delta$. Consequently, ${\rm maxSpec}(Z)$ is isomorphic to the coordinate ring of a relative quasiprojective curve over the 4-dimensional affine space $\mathbb A^4=\mathrm{maxSpec}(k[z_0,z_1,z_2,z_3])$ that is elliptic over an open subset of $\mathbb A^4$.
\end{thm}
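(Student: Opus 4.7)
My plan is to handle the three assertions of the theorem in order: centrality together with module-finiteness, the single-relation presentation, and the geometric consequence.

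First, I would verify directly that each $z_i$ in \eqref{Z} lies in $Z$. The elements $z_0 = x^3$ and $z_3 = y^3$ are central by the outer relations in \eqref{relationC}. Since $z_1$ and $z_2$ are the full symmetrizations of $x^2y$ and $xy^2$, their commutators with $x$ and $y$ collapse to the middle relation $x^2y^2 + xyxy - yxyx - y^2x^2 = 0$; the same relation also yields the equality of the two presentations of $z_5$ and its centrality. Centrality of $z_4$ is more delicate but is still a finite calculation in the PBW basis. To show $\mathcal C$ is module-finite over $Z$, I would use the diamond-lemma basis $\{y^i(xy^2)^j(xy)^k(x^2y)^l x^m\}$ and argue by degree induction that each basis element lies in the $Z$-span of a finite set of short monomials, with $z_0$ and $z_3$ absorbing the $x^3$ and $y^3$ factors.

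The second step is to verify $z_4^2 = z_5^3 - 27\Delta$ and to conclude that this is the \emph{only} relation among the $z_i$. The identity itself is a brute-force computation: expanding $z_4^2$ and $z_5^3$ in the PBW basis, reducing to normal form, and comparing with the expansion of $\Delta$ from \eqref{Delta}. This is where I expect the main obstacle of the proof, since $z_4$ already has degree $6$ in $x,y$ and both $z_4^2$ and $z_5^3$ expand into many degree-$12$ noncommutative monomials that must be normalized. Once the relation is in hand, dimension counting finishes the job: since $\mathcal C$ is a prime noetherian PI algebra with $\GKdim(\mathcal C) = 5$ (from its Hilbert series), the center satisfies $\GKdim(Z) = 5$; meanwhile $k[z_0,\dots,z_5]/(z_4^2 - z_5^3 + 27\Delta)$ is an integral $5$-dimensional domain (the polynomial is irreducible because it is quadratic in $z_4$ with non-square constant term), and the canonical surjection onto $Z$ must therefore be an isomorphism.

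The third step is then immediate. Setting $B = k[z_0,z_1,z_2,z_3]$, the presentation reads $Z = B[z_4,z_5]/(z_4^2 - z_5^3 + 27\Delta)$, a family of affine plane curves over $\mathbb A^4 = \Spec(B)$. The fiber over a closed point $(a,b,c,d)$ is the affine Weierstrass curve $z_4^2 = z_5^3 - 27 D(a,b,c,d)$, which is a nonsingular elliptic curve precisely when $D \ne 0$. Hence $\Spec(Z) \to \mathbb A^4$ is a relative quasiprojective curve, elliptic over the open complement of the discriminant hypersurface $\{\Delta = 0\}$, as claimed.
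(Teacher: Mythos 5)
The paper does not prove this theorem: it is quoted from the Ure--Kulkarni preprint \cite{UK} (a privately circulated manuscript), so there is no in-paper proof to compare against. Evaluating your reconstruction on its own terms, the overall shape is sensible, but there is a genuine gap at the step you treat as routine.

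Your dimension count establishes injectivity, not surjectivity. You correctly note that $Z$ has GK-dimension (hence Krull dimension) five, and that $k[z_0,\dots,z_5]/(z_4^2-z_5^3+27\Delta)$ is a five-dimensional affine domain; this shows the natural map $k[z_0,\dots,z_5]/(z_4^2-z_5^3+27\Delta)\to Z$ has zero kernel. But you then call this map a ``canonical surjection,'' and surjectivity --- i.e., that $z_0,\dots,z_5$ generate \emph{all} of $Z$, rather than a proper central subalgebra --- is precisely the content of the theorem; equal Krull dimension does not give it (compare $k[t^2,t^3]\hookrightarrow k[t]$). To close the gap one must show either that every central element of $\mathcal C$ is a polynomial in the $z_i$ (a nontrivial computation in the diamond-lemma basis), or that the subalgebra $Z':=k[z_0,\dots,z_5]$ is normal (Serre's criterion, using that the hypersurface is regular in codimension one as in Corollary \ref{SC}) \emph{and} that $\operatorname{Frac}(Z')=\operatorname{Frac}(Z)$ (say by a Hilbert-series rank comparison against $\mathcal C$), so that the integral extension $Z'\subseteq Z$ collapses. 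Neither argument is in your outline. Separately, your module-finiteness sketch only explains how $z_0,z_3$ absorb powers of $x$ and $y$; it does not explain how the middle monomials $(xy^2)^j(xy)^k(x^2y)^l$ get bounded, so that step too is incomplete as stated. The first paragraph (centrality of the $z_i$) and the third (the geometric reading of the presentation, fibered over $\mathbb A^4$ and elliptic off the discriminant locus) are fine.
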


\begin{cor}\label{SC}
The reduced variety of the singular locus $\mathcal S_{\mathcal C}$ of $Z$ is an affine twisted cubic curve in $\mathbb A^4={\rm maxSpec}(k[z_0,z_1,z_2,z_3])$ defined by 
$$\mathcal S_C=\mathbb V\left(z_4,\, z_5,\, z_0z_3-z_1z_2,\, z_0z_2-z_1^2,\, z_1z_3-z_2^2\right).$$
\end{cor}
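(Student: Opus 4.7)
The plan is to present $\mathrm{maxSpec}(Z)$ as an irreducible hypersurface in $\mathbb A^6$ using Theorem \ref{T:UK}, apply the Jacobian criterion, and then invoke Lemma \ref{SD} to identify the resulting zero locus. By Theorem \ref{T:UK}, $Z \cong k[z_0,z_1,z_2,z_3,z_4,z_5]/(F)$ where $F := z_4^2 - z_5^3 + 27\Delta$; the polynomial $F$ is quadratic in $z_4$ over $k[z_0,\ldots,z_3,z_5]$ with constant term $-(z_5^3 - 27\Delta)$ that is not a square, so $F$ is irreducible and $\mathrm{maxSpec}(Z) = \mathbb V(F)$ is a reduced irreducible hypersurface.

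Applying the Jacobian criterion, $\mathcal S_{\mathcal C}$ is the common zero locus of $F$ together with all of its partial derivatives. A direct calculation yields $\partial F/\partial z_4 = 2z_4$, $\partial F/\partial z_5 = -3z_5^2$, and $\partial F/\partial z_i = 27\,\partial \Delta/\partial z_i$ for $i=0,1,2,3$. Requiring these to vanish forces $z_4 = 0$, $z_5 = 0$, and $\nabla\Delta = 0$ in $\mathbb A^4 = \mathrm{maxSpec}(k[z_0,z_1,z_2,z_3])$. Moreover, since $\Delta$ is homogeneous of degree four in $z_0,z_1,z_2,z_3$, Euler's identity $4\Delta = \sum_{i=0}^{3} z_i\,\partial \Delta/\partial z_i$ shows that $\nabla\Delta = 0$ already forces $\Delta = 0$, so the remaining constraint $F = 0$ holds automatically.

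Thus, set-theoretically, $\mathcal S_{\mathcal C}$ equals the intersection of $\{z_4 = z_5 = 0\}$ with the zero locus of $\nabla\Delta$ in $\mathbb A^4$. The proof of Lemma \ref{SD} dehomogenizes immediately to show that this affine zero locus is the cone over the twisted cubic curve in $\mathbb P^3$, which coincides set-theoretically with the vanishing of the $2\times 2$ minors of $\begin{pmatrix} z_0 & z_1 & z_2 \\ z_1 & z_2 & z_3 \end{pmatrix}$, namely $z_0z_3 - z_1z_2$, $z_0z_2 - z_1^2$, and $z_1z_3 - z_2^2$. Combined with $z_4 = z_5 = 0$, this gives the stated description of $\mathcal S_{\mathcal C}$ as a reduced variety. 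The only non-trivial ingredient is the identification of the vanishing locus of $\nabla\Delta$ with the twisted cubic cone, which is exactly the content of Lemma \ref{SD}; everything else amounts to the Jacobian computation and a translation from the projective to the affine-cone setting.
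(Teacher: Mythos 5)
Your proof is correct and follows the same route as the paper: present $Z$ as a hypersurface $\mathbb V(F)$ in $\mathbb A^6$ with $F=z_4^2-z_5^3+27\Delta$, apply the Jacobian criterion to reduce (set-theoretically) to $z_4=z_5=0$ and $\nabla\Delta=0$, and then invoke Lemma \ref{SD}. The extra details you supply — verifying irreducibility of $F$ and using Euler's identity to note that $\nabla\Delta=0$ already implies $F=0$ — are sound fill-ins the paper leaves implicit.
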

\begin{proof}
Since the center $Z$ generated by $(z_i)_{0\le i\le 5}$ is subject to one relation $R:=z_4^2-z_5^3+27\Delta$, the singular locus $\mathcal S_\mathcal C$ of $Z$ is the zero locus of the derivatives $\frac{\partial R}{\partial z_i}$ for $0\le i\le 5$, or equivalently (if ignoring multiplicity) $z_4=z_5=0$ and $\frac{\partial \Delta}{\partial z_0}=\frac{\partial \Delta}{\partial z_1}=\frac{\partial \Delta}{\partial z_2}=\frac{\partial \Delta}{\partial z_3}=0$. Thus, the result follows from Lemma \ref{SD}.
\end{proof}

\section{Irreducible representations and Azumaya locus of $\mathcal C$}
\begin{note}
It is clear that the set of isomorphism classes of one-dimensional representations of $\mathcal C$ is bijective to $\mathbb A^2$ via the one-to-one correspondence ${\rm Ann}_\mathcal C(S)=(x-a,y-b)\leftrightarrow (a,b)\in \mathbb A^2$ for any $\mathcal C$-module $S$ with $\dim_k(S)=1$. The following result is crucial to our work. 
\end{note}

\begin{lem}\label{dim2}
There are no two-dimensional irreducible representations over $\mathcal C$.
\end{lem}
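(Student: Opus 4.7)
The plan is to argue by contradiction. Suppose $S$ is a two-dimensional irreducible $\mathcal C$-module and let $X, Y \in M_2(k)$ be the images of the generators $x, y$. The first and third relations of \eqref{relationC} say that $x^3$ and $y^3$ commute with both $x$ and $y$, so they are central in $\mathcal C$; Schur's lemma then forces $X^3 = \lambda I$ and $Y^3 = \mu I$ for scalars $\lambda, \mu \in k$. I split the argument according to whether $\lambda$ vanishes.

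For the main case $\lambda \neq 0$, $X$ is non-scalar (otherwise $X$ and $Y$ commute, producing a common eigenvector over the algebraically closed field $k$ and contradicting irreducibility), so $X$ is diagonalizable with two distinct cube roots of $\lambda$ as eigenvalues, say $\alpha$ and $\alpha\omega$ for a primitive cube root of unity $\omega$. In this eigenbasis of $X$ I would write $Y$ as a matrix with entries $p, q, r, s$ (in the usual reading order), noting that irreducibility forces $q, r \neq 0$. Substituting into the middle relation of \eqref{relationC}, the diagonal entries vanish automatically while the $(1,2)$ and $(2,1)$ entries become nonzero scalar multiples of the single expression $(1-\omega^2)(p+s) + (1-\omega)(p+\omega s)$. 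Using $1+\omega+\omega^2 = 0$ this collapses to $3(p - \omega^2 s)$, so $p = \omega^2 s$. A Cayley--Hamilton computation with $\mathrm{tr}(Y) = -\omega s$ and $\det(Y) = \omega^2 s^2 - qr$ then yields $Y^3 = qr \cdot Y + (s^3 - \omega qrs)\, I$. Since $Y^3 = \mu I$ is scalar but $Y$ itself is not (as $q \neq 0$), we must have $qr = 0$, contradicting $q, r \neq 0$.

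If instead $\lambda = 0$, then $X^3 = 0$ in $M_2(k)$ forces $X^2 = 0$ by Cayley--Hamilton, and $X \neq 0$ (else $S$ is reducible). Choose a basis in which $X = \begin{pmatrix} 0 & 1 \\ 0 & 0 \end{pmatrix}$; irreducibility then forces the $(2,1)$-entry $c$ of $Y$ to be nonzero. Using $X^2 = 0$, the middle relation of \eqref{relationC} collapses to $(XY)^2 = (YX)^2$, and comparing the $(1,1)$ entries of both sides produces $c^2 = 0$, a contradiction. The case $\mu = 0$ is handled identically, using the symmetry of the relations in \eqref{relationC} under $x \leftrightarrow y$ (up to overall sign). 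The main obstacle is the bookkeeping in the first case: extracting the clean condition $p = \omega^2 s$ from the degree-four middle relation, and organizing the Cayley--Hamilton expansion of $Y^3$ so that the non-scalar part appears precisely as $qr \cdot Y$. Neither step is deep, but both require careful use of $1+\omega+\omega^2=0$ at several points.
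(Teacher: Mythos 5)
Your proposal is correct and follows essentially the same route as the paper: diagonalize $\varphi(x)$ using centrality of $x^3$, extract the constraint $p=\omega^2 s$ (the paper's $a=\omega^2 d$) from the mixed relation, then use centrality of $y^3$ to force the off-diagonal product $qr$ to vanish and hence contradict irreducibility. Two points of comparison are worth recording. First, your Cayley--Hamilton organization of the $Y^3$ computation, giving $Y^3 = qr\,Y + (s^3 - \omega qrs)I$, is a tidier way to reach $qr=0$ than the paper's explicit matrix of $\varphi(y^3)$, though the content is identical. Second, and more substantively, you explicitly treat the possibility $\varphi(x)^3=0$: the paper's step ``we can assume $\varphi(x)=\bigl(\begin{smallmatrix}1&0\\0&\omega\end{smallmatrix}\bigr)$'' silently requires $\varphi(x)^3$ to be a \emph{nonzero} scalar, and rules out only $\varphi(x)=0$ and $\varphi(x)$ scalar, leaving the nilpotent case unaddressed. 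Your short argument (reduce to $X=E_{12}$, note the mixed relation collapses to $(XY)^2=(YX)^2$, compare $(1,1)$-entries to get $c^2=0$) closes that gap cleanly, so your write-up is actually slightly more complete than the published one. The concluding remark about ``the case $\mu=0$'' is harmless but redundant, since your case split on $\lambda$ is already exhaustive.
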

\begin{proof}
Suppose $S$ is any two-dimensional irreducible representation over $\mathcal C$. This implies that there is a surjective algebra map $\varphi: \mathcal C\twoheadrightarrow {\rm End}_k(S)={\rm M}_2(k)$. One sees easily that $\varphi(x)\neq 0$; otherwise $S$ can be viewed as a module over $\mathcal C/(x)\cong k[y]$, where all irreducible representations are one-dimensional. The same argument shows that $\varphi(x)$ is not a scalar multiple of the identity matrix in ${\rm M}_2(k)$. Since $x^3$ is central in $\mathcal C$, $\varphi(x^3)=\varphi(x)^3$ is a scalar multiple of the identity matrix. Therefore by a linear transformation of $S$ and a possible rescaling of the variables $x,y$ of $\mathcal C$, we can assume that $\varphi(x)=\begin{pmatrix} 1 & 0 \\ 0 & \omega \end{pmatrix}$ where $\omega$ is a primitive third root of unity. Now write $\varphi(y)=\begin{pmatrix} a & b \\ c & d \end{pmatrix}$ for some $a,b,c,d\in k$. When we apply $\varphi$ to the relations in $\mathcal C$, we obtain
\begin{align*}
\varphi(x^2y^2+(xy)^2-(yx)^2-y^2x^2)=\begin{pmatrix}0 & 3b(a-\omega ^2d) \\ -3c(a-\omega^2d) & 0 \end{pmatrix}=0.
\end{align*}
If $a\neq \omega^2d$, then we have $b=c=0$. So $\varphi(x)$ and $\varphi(y)$ are both diagonal. This shows that $S$ is not irreducible, which is a contradiction. Hence we have $a=\omega^2 d$. Thus, $\varphi$ sends the central element $y^3$ to
$$\varphi(y^3)=\begin{pmatrix} a^3+(2+\omega)abc & b^2c \\ bc^2 &a^3+(1+2\omega)abc \end{pmatrix}.$$
Since $\varphi(y^3)$ is a scalar multiple of the identity matrix, we obtain $bc=0$ and $\varphi(y)$ is either upper or lower triangular. Again, it is a contradiction since $S$ is irreducible. 
\end{proof}
\begin{note}
We denote by ${\rm Irr}\, \mathcal C$ the isomorphism classes of all irreducible representations over $\mathcal C$. For each possible integer $n\ge 1$, ${\rm Irr}_n\, \mathcal C$ denotes the isomorphism subclasses of all $n$-dimensional irreducible representations over $\mathcal C$. Recall $\mathcal S_\mathcal C\subset {\rm maxSpec}(Z)$ is the singular locus of $\mathcal C$ and we call its complement ${\rm maxSpec}(Z)\setminus \mathcal S_\mathcal C$ the \emph{smooth locus} of $\mathcal C$. Note that there is a natural surjection 
\begin{equation}\label{SurSpec}
\chi: {\rm Irr}\, \mathcal C\twoheadrightarrow Y:={\rm maxSpec}(Z)
\end{equation} 
via $S\mapsto {\rm Ann}_\mathcal C(S)\cap Z$ for any irreducible representation $S$ over $\mathcal C$. 
\end{note}
\begin{thm}\label{RepC}
The following hold for the binary cubic generic Clifford algebra $\mathcal C$.
\begin{itemize}
\item[(a)] $\mathcal C$ is a PI algebra of PI degree three.
\item[(b)] ${\rm Irr}\, \mathcal C={\rm Irr}_1\, \mathcal C\sqcup {\rm Irr}_3\, \mathcal C$.
\item[(c)] The map ${\rm Irr}_1\, \mathcal C\twoheadrightarrow S_\mathcal C$ is three to one. 
\item[(d)] The map ${\rm Irr}_3\, \mathcal C\to  {\rm maxSpec}(Z)\setminus \mathcal S_\mathcal C$ is one to one. 
\item[(e)] The Azumaya locus of $\mathcal C$ coincides with the smooth locus of $\mathcal C$. 
\end{itemize}
\end{thm}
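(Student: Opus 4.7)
The plan is to establish (a) first and then derive the remaining parts in order from (a), Lemma~\ref{dim2}, Corollary~\ref{SC}, and Theorem~\ref{PI}. For (a): $\mathcal C$ is module-finite over its center $Z$ by Theorem~\ref{T:UK}, hence PI. For the lower bound $\text{PI-deg}(\mathcal C)\geq 3$, I would pick any binary cubic $f$ with discriminant $D\neq 0$; by Theorem~\ref{Clifford}, $\mathcal C_f$ is Azumaya of PI-degree three and hence possesses a three-dimensional irreducible representation, which lifts along $\mathcal C\twoheadrightarrow\mathcal C_f$, so Theorem~\ref{PI}(a) gives $\text{PI-deg}(\mathcal C)\geq 3$. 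For the upper bound, I would compute the generic rank $[\mathcal C:Z]$ via Hilbert series: by Theorem~\ref{T:UK} the center $Z$ has generators in degrees $3,3,3,3,4,6$ and one relation in degree $12$, so $H_Z(t)=(1+t^6)/((1-t^3)^4(1-t^4))$; dividing $H_{\mathcal C}(t)$ by $H_Z(t)$ and evaluating at $t=1$ should yield generic rank $9$. Since $\mathcal C$ is prime PI, $Q(\mathcal C)$ is central simple over $Q(Z)$ of dimension $9=3^2$, forcing $\text{PI-deg}(\mathcal C)=3$.

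For (b), part (a) combined with Theorem~\ref{PI}(a) bounds every irreducible by dimension three, and Lemma~\ref{dim2} excludes dimension two. For (c), the one-dimensional irreducibles are parametrized by $(a,b)\in\mathbb A^2$ via $x\mapsto a,\,y\mapsto b$, and substituting into \eqref{Z} gives central character $(z_0,z_1,z_2,z_3,z_4,z_5)\mapsto(a^3,a^2b,ab^2,b^3,0,0)$. The vanishing $z_5\mapsto 0$ is immediate from commutativity, while $z_4\mapsto 0$ reduces to $(1-\omega)^3-\tfrac{3}{2}\omega(1-\omega)-\tfrac{9}{2}(1+2\omega^2)=-\tfrac{9}{2}(1+\omega+\omega^2)=0$. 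The image is thus exactly the affine twisted cubic $\mathcal S_\mathcal C$ of Corollary~\ref{SC}, and the fiber over a nonzero image point is $\{(\zeta a,\zeta b):\zeta^3=1\}$, giving the three-to-one covering.

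For (e), the inclusion $\mathcal A_\mathcal C\subseteq\mathrm{maxSpec}(Z)\setminus\mathcal S_\mathcal C$ is built into Theorem~\ref{PI}. Conversely, any $\mathfrak m\in\mathrm{maxSpec}(Z)\setminus\mathcal S_\mathcal C$ cannot support a one-dimensional irreducible (by (c) such central characters all lie in $\mathcal S_\mathcal C$), so by (b) only three-dimensional irreducibles sit above $\mathfrak m$; Theorem~\ref{PI}(b) then certifies $\mathfrak m$ as Azumaya. Part (d) follows at once: above each Azumaya point the unique irreducible has dimension $\text{PI-deg}=3$, giving the bijection ${\rm Irr}_3\,\mathcal C\leftrightarrow\mathrm{maxSpec}(Z)\setminus\mathcal S_\mathcal C$.

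The main obstacle I anticipate is the upper bound in (a): the Hilbert series manipulation is routine but depends on correctly identifying the graded structure of $Z$ from Theorem~\ref{T:UK} and checking that the degree-$12$ relation is regular in the polynomial ring generated by $z_0,\dots,z_5$. A cleaner alternative would be to show $\mathcal C$ satisfies the standard identity $s_6$ directly, but this seems difficult without already having a faithful three-dimensional matrix representation of $Q(\mathcal C)$. The identity $z_4\mapsto 0$ needed in (c) is a smaller secondary check built on $1+\omega+\omega^2=0$.
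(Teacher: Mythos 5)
Your proof is essentially correct, and parts (b)--(e) run along the same lines as the paper's, but your treatment of (a) takes a genuinely different route. The paper fixes a maximal ideal $\mathfrak m$ in the dense open set $\chi({\rm Irr}_n\,\mathcal C)\setminus\mathbb V(\Delta)$, picks an irreducible $S$ of dimension $n={\rm PI}$-${\rm deg}(\mathcal C)$ lying over $\mathfrak m$, and observes that $S$ factors through the Azumaya algebra $\mathcal C_f$ (with $D\neq 0$), so Haile's theorem forces $n=3$ in one stroke. You instead split the argument: the lower bound ${\rm PI}$-${\rm deg}(\mathcal C)\geq 3$ by inflating a three-dimensional irreducible from a nondegenerate $\mathcal C_f$, and the upper bound via the multiplicity/Hilbert-series computation $H_{\mathcal C}(t)/H_Z(t)=(1+t+t^2)^2/(1-t^2+t^4)$, which evaluates to $9$ at $t=1$, so $\dim_{Q(Z)}Q(\mathcal C)=9=3^2$. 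Both are valid; in fact your upper-bound computation alone already determines the PI degree exactly, so the lower-bound step is redundant. The tradeoff is that your route leans on knowing the precise graded presentation of $Z$ (generator degrees $3,3,3,3,4,6$, one degree-$12$ relation cutting out a hypersurface, and the standard identification of rank with ratio of multiplicities), whereas the paper's specialization argument avoids any Hilbert-series bookkeeping and needs only the openness of the Azumaya locus from Theorem~\ref{PI} plus irreducibility of $Y$. One small additional difference: in (c) the paper invokes the Brown--Yakimov bound $\sum_V\dim_k V\leq{\rm PI}$-${\rm deg}$ to pin down the fiber exactly, while your argument identifies the three preimages directly from the cube-root symmetry (which suffices for the stated map ${\rm Irr}_1\,\mathcal C\to\mathcal S_\mathcal C$); you then implicitly use the containment $\mathcal A_\mathcal C\subseteq Y\setminus\mathcal S_\mathcal C$ from Theorem~\ref{PI} to rule out three-dimensional irreducibles over singular points, which closes (d) and (e) cleanly. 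Your computation of $z_4\mapsto 0$ via $-\tfrac{9}{2}(1+\omega+\omega^2)=0$ is correct and makes explicit a step the paper leaves as ``one checks.''
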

\begin{proof}
(a) By Theorem \ref{T:UK}, we know $\mathcal C$ is module-finite over its center $Z$ and hence is PI by \cite[Corollary 13.1.13(iii)]{MR}. Now suppose $\mathcal C$ has PI degree $n$. Regarding Eq.\eqref{SurSpec}, we know $\chi({\rm Irr}_n\, \mathcal C)$ is an open dense subset of $Y$ by Theorem \ref{PI}. Take the formal discriminant $\Delta$ of $\mathcal C$ as in Eq.\eqref{Delta}. Note that $Y\setminus \mathbb V(\Delta)$ is another open dense subset of $Y$ since $Y$ is irreducible. So there exists a maximal ideal 
$$\mathfrak m=(z_0-a,z_1-b,z_2-c,z_3-d,z_4-e,z_5-f)\in \chi({\rm Irr}_n\, \mathcal C)\setminus \mathbb V(\Delta),$$ for some $a,b,c,d,e,f\in k$. Choose any irreducible representation $S$ over $\mathcal C$ such that $\chi(S)=\mathfrak m$ and $\dim_k (S)=n$. Let $C_f$ be the Clifford algebra associated to the binary cubic form $f(u,v)=au^3+3bu^2v+3cuv^2+dv^3$. Therefore, there are algebra surjections
$$\mathcal C\twoheadrightarrow \mathcal C_f\twoheadrightarrow \mathcal C/{\mathfrak m}\, \mathcal C,$$
where the first surjection sends $\Delta$ to a nonzero discriminant $D$ in $C_f$ (see Eq.\eqref{D}). By Theorem \ref{Clifford}, $\mathcal C_f$ is an Azumaya algebra of PI degree three. So every irreducible representation over $\mathcal C_f$ is three-dimensional and the same holds for its image $\mathcal C/{\mathfrak m}\,\mathcal C$ as well. Since $\mathfrak mS=0$, we can view $S$ as an irreducible representation over $\mathcal C/{\mathfrak m}\, \mathcal C$. This implies that $\dim_k(S)=n=3$.

(b) follows from Theorem \ref{PI} and Lemma \ref{dim2} since $\mathcal C$ has PI degree three.

(c) Let $S(a,b)=\mathcal C/(x-a,y-b)$ be any one-dimensional representation over $\mathcal C$ for some $a,b\in k$. One checks by Eq.\eqref{Z} that 
$$\chi\left(S(a,b)\right)=\left(z_0-a^3,\,z_1-a^2b,\,z_2-ab^2,\,z_3-b^3,\,z_4,\,z_5\right)=:\mathfrak n.$$ 
Also we have $\chi\left(S(\omega a, \omega b)\right)=\chi\left(S(\omega^2 a, \omega^2 b)\right)=\mathfrak n$. Suppose $\chi^{-1}(\mathfrak n)=\{S_1,S_2,\dots,S_t\}$. According to \cite[Theorem 3.1(f)]{BY}, 
$$\dim_k(S_1)+\dim_k(S_2)+\cdots +\dim_k(S_t)\le {\rm PI}\text{-}{\rm deg}(\mathcal C)=3.$$ 
Hence $t\le 3$ and $\chi^{-1}(\mathfrak n)$ exactly contains three one-dimensional irreducible representations $S(a,b)$, $S(\omega a, \omega b)$ and $S(\omega^2 a, \omega^2 b)$. Moreover, $\chi$ maps ${\rm Irr}_1\, \mathcal C$ onto $\mathcal S_\mathcal C$ by Corollary \ref{SC}. The result follows.

(d) By (b), we have the surjection $\chi: \mathcal C={\rm Irr}_1\, \mathcal C\sqcup {\rm Irr}_3\, \mathcal C\twoheadrightarrow Y=\mathcal S_\mathcal C\sqcup(Y\setminus \mathcal S_\mathcal C)$.  By (c), we have an induced surjection $\chi: {\rm Irr}_3\, \mathcal C\twoheadrightarrow Y\setminus \mathcal S_\mathcal C$. Note that $\mathcal C$ has PI degree three. By the same argument as in (c), $\chi$ yields a one-to-one correspondence between ${\rm Irr}_3\, \mathcal C$ and $Y\setminus \mathcal S_\mathcal C$.

(e) is a consequence of (d) and (b) of Theorem \ref{PI}.
\end{proof}

\begin{note}
As recalled in Theorem \ref{Clifford}, Haile showed that the Clifford algebra $\mathcal C_f$ is an Azumaya algebra when the binary cubic form $f$ is nondegenerate or the discriminant $D\neq 0$ (see Eq.\eqref{D}). In the following, we give a necessary and sufficient condition for $\mathcal C_f$ to be Azumaya. It turns out that $\mathcal C_f$ can be an Azumaya algebra even when $f$ is degenerate or $D=0$. 
\end{note}

\begin{cor}
Let $k$ be a base field of characteristic different from $2$ or $3$. The Clifford algebra $\mathcal C_f$ associated to the binary cubic form $f(u,v)=au^3+3bu^2v+3cuv^2+dv^3$ is an Azumaya algebra (of {\rm PI degree} three) if and only if the point $(a,b,c,d)$ does not lie on the affine twisted cubic curve in $\mathbb A^4$ described in Lemma \ref{SD}.
\end{cor}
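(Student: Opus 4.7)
The plan is to reduce the corollary to Theorem \ref{RepC}(e), which identifies the Azumaya locus of $\mathcal{C}$ with its smooth locus, combined with the explicit description of the singular locus in Corollary \ref{SC}. The starting observation I would record is the ring-theoretic identification
\[
\mathcal{C}_f \;\cong\; \mathcal{C}/J\mathcal{C},\qquad J := (z_0-a,\, z_1-b,\, z_2-c,\, z_3-d)\subset Z,
\]
which follows by comparing \eqref{relationC} with \eqref{relationCf}; equivalently, the maximal ideals of the image of $Z$ in $\mathcal{C}_f$ are in bijection with the maximal ideals $\mathfrak m \subset Z$ containing $J$.

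For the necessity direction, I would assume $\mathcal{C}_f$ is Azumaya of PI degree $3$. Theorem \ref{PI}(a) then forces every irreducible $\mathcal{C}_f$-module to be $3$-dimensional. However, if $(a,b,c,d)=(s^3,\,s^2t,\,st^2,\,t^3)$ lies on the twisted cubic, the assignment $x\mapsto s$, $y\mapsto t$ satisfies every relation in \eqref{relationCf} and exhibits a $1$-dimensional irreducible representation of $\mathcal{C}_f$, a contradiction.

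For the sufficiency direction, assume $(a,b,c,d)$ is not on the twisted cubic. By Theorem \ref{T:UK}, every maximal ideal of $Z$ containing $J$ has the form
\[
\mathfrak m=(z_0-a,\,z_1-b,\,z_2-c,\,z_3-d,\,z_4-e,\,z_5-f),\qquad e^2=f^3-27D,
\]
where $D$ is the discriminant \eqref{D} of $f$. By Corollary \ref{SC}, such an $\mathfrak m$ can lie in $\mathcal{S}_{\mathcal C}$ only if $e=f=0$ and $(a,b,c,d)$ satisfies the twisted-cubic equations; the latter fails by hypothesis, so no such $\mathfrak m$ lies in $\mathcal{S}_{\mathcal C}$. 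Theorem \ref{RepC}(e) therefore gives $\mathfrak m\in\mathcal{A}_{\mathcal C}$, so $\mathcal{C}_{\mathfrak m}$ is Azumaya over $Z_{\mathfrak m}$. Base change along $Z_{\mathfrak m}\to (Z/J)_{\mathfrak m}$ then yields that $(\mathcal{C}_f)_{\bar{\mathfrak m}}=\mathcal{C}_{\mathfrak m}\otimes_{Z_{\mathfrak m}}(Z/J)_{\mathfrak m}$ is Azumaya over $(Z/J)_{\bar{\mathfrak m}}$ for every maximal ideal $\bar{\mathfrak m}$ of $Z/J$. Since being Azumaya is local on the center, $\mathcal{C}_f$ is Azumaya over $Z/J$, which is forced to coincide with $Z(\mathcal{C}_f)$. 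The PI degree equals $3$ because $\mathcal{C}_f$ is a quotient of the PI-degree-$3$ algebra $\mathcal{C}$ and admits at least one $3$-dimensional irreducible representation, obtained from the Azumaya fiber at any $\bar{\mathfrak m}$.

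The main obstacle I anticipate is the local-to-global/base-change step in the sufficiency direction: one has to check cleanly that local Azumaya at every closed point of $\mathrm{Spec}(Z/J)$ assembles into a global Azumaya structure on $\mathcal{C}_f$ over $Z/J$, and that $Z/J$ thereby equals the full center $Z(\mathcal{C}_f)$. This is standard once Azumaya is obtained locally, but it is the only technical input beyond what the preceding sections of the paper have already produced.
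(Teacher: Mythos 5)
Your proposal is essentially correct and reaches the same conclusion, but the route differs from the paper's in a way worth noting. The paper never invokes base change or local-to-global gluing of the Azumaya property: instead it observes that every irreducible representation of $\mathcal C_f=\mathcal C/J\mathcal C$ factors through some $\mathcal C/\mathfrak m\,\mathcal C$ with $\mathfrak m\in\pi^{-1}(a,b,c,d)$, uses Theorem~\ref{RepC} and Corollary~\ref{SC} to read off the dimensions of the irreducibles over each such $\mathfrak m$, and then applies the Artin--Procesi theorem (\cite[Theorem 8.3]{Artin}) to convert ``all irreducibles $3$-dimensional'' into ``Azumaya'' and ``irreducibles of mixed dimensions $1$ and $3$'' into ``not Azumaya.'' Your version replaces the Artin--Procesi step with the statement that Azumaya is preserved by base change $Z_{\mathfrak m}\to (Z/J)_{\bar{\mathfrak m}}$ and is local on the center; this works, but it quietly assumes that $Z/J$ injects into $\mathcal C_f$ (equivalently, that $J\mathcal C\cap Z=J$), a flatness-type fact that the paper's approach sidesteps entirely since it only ever speaks about irreducible representations and their central characters.

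Two smaller points. First, you should reduce to the case $k=\bar k$ at the start, as the paper does; without it the description ``every maximal ideal of $Z$ containing $J$ has the form $(z_0-a,\dots,z_5-f)$'' and the appeal to Corollary~\ref{SC} are not literally valid. Second, your necessity direction assumes ``Azumaya \emph{of PI degree $3$}'' and derives a contradiction from the $1$-dimensional representation; but the corollary's equivalence should hold with hypothesis ``Azumaya'' alone, and a $1$-dimensional irreducible does not by itself contradict Azumaya (an Azumaya algebra of PI degree $1$ has only $1$-dimensional irreducibles). To close this you should also exhibit a $3$-dimensional irreducible of $\mathcal C_f$ even when $(a,b,c,d)$ lies on the twisted cubic --- e.g.\ pick $\mathfrak m\in\pi^{-1}(a,b,c,d)$ with $(z_4,z_5)\neq(0,0)$, which exists on the fiber $z_4^2=z_5^3$, and use $\mathcal C/\mathfrak m\,\mathcal C\cong \mathrm M_3(k)$ --- so that the mixed dimensions rule out Azumaya of any PI degree. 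This is exactly what the paper does by noting $\mathcal C_f$ has irreducibles of both dimensions $1$ and $3$ before invoking Artin--Procesi.
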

\begin{proof}
Without loss of generality, we can assume $k$ to be algebraically closed. Recall $Z=k[z_0,\dots,z_5]$, where $z_4^2=z_5^3-27 \Delta$, is the center of $\mathcal C$. It is easy to check that $Z$ has a  $k$-basis 
$$\{z_0^{i_0}z_1^{i_1}z_2^{i_2}z_3^{i_3}z_4^{i_4}z_5^{i_5}\,|\, 0\le i_0,i_1,i_2,i_3,i_4, 0\le i_5\le 2\}.$$ 
As a consequence, it induces a projection $\pi: {\rm maxSpec}(Z)\twoheadrightarrow \mathbb A^4$ given by the natural inclusion $k[z_0,z_1,z_2,z_3]\hookrightarrow Z$. Therefore the natural surjection $\mathcal C\twoheadrightarrow \mathcal C_f$ factors through $\mathcal C/\mathfrak m\, \mathcal C$ for some $\mathfrak m\in {\rm maxSpec}(Z)$ if and only if $\mathfrak m\in \pi^{-1}(a,b,c,d)$. 
By Theorem \ref{RepC}, we know 
\[\mathcal C/\mathfrak m\,\mathcal C\, \cong\, 
\begin{cases}
{\rm M}_3(k) & \mathfrak m\in{\rm maxSpec}(Z)\setminus S_\mathcal C\\ 
\text{a local algebra} & \mathfrak m\in S_\mathcal C
\end{cases}.
\] 
By Corollary \ref{SC}, $\pi^{-1}(a,b,c,d)\,\cap\, \mathcal S_\mathcal C\neq \emptyset$ if and only if $(a,b,c,d)$ lies in the affine twisted cubic curve in $\mathbb A^4$ given by Lemma \ref {SD}. In this case, we know $\mathcal C_f$ has irreducible representations of both dimensions one and three. Otherwise, all the irreducible representations over $\mathcal C_f$ are of dimension three. Thus the result follows by the Artin-Procesi theorem on polynomial identities (see \cite[Theorem 8.3]{Artin}).  
\end{proof}

\section{Point variety of $\mathcal C$}

\begin{note}
In noncommutative projective algebraic geometry (e.g. see \cite{ATV}), a {\it point module} $M=\bigoplus_{i\ge 0} M_i$ over $\mathcal C$ is a cyclic graded module with Hilbert series $\frac{1}{1-t}$, or namely $M$ is generated by $M_0$ and $\dim_k M_i=1$ for all $i\ge 0$. Note that every point module $M$ over $\mathcal C$ can be denoted by the following diagram
\[
\xymatrix{
M(p_0,p_1,p_2,\dots): &\underset{e_0}{\bullet}\ar@/^/[r]^-{x_0}\ar@/_/[r]_-{y_0} &\underset{e_1}{\bullet}\ar@/^/[r]^-{x_1}\ar@/_/[r]_-{y_1}& \underset{e_2}{\bullet}\ar@/^/[r]^-{x_2}\ar@/_/[r]_-{y_2}&  \underset{e_3}{\bullet}\ar@/^/[r]^-{x_3}\ar@/_/[r]_-{y_3}&  \underset{e_4}{\bullet}\ar@/^/[r]^-{x_4}\ar@/_/[r]_-{y_4} & \bullet &\cdots 
}
\]
where $M=\bigoplus_{i\ge 0} k\,e_i$ and $0\neq p_i=(x_i,y_i)\in k^2$ with the $\mathcal C$-action on $M$ given by 
\begin{align}\label{ActionM}
xe_i=x_{i}e_{i+1},\quad ye_i=y_ie_{i+1}.
\end{align} 
It is straightforward to check that two point modules $M(p_0,p_1,\dots)$ and $M(q_0,q_1,\dots)$ are isomorphic as graded $\mathcal C$ modules if and only if $p_i=q_i$ in $\mathbb P^1$ for all $i$ or $(p_0,p_1,\dots)=(q_0,q_1,\dots)$ in $\mathbb P^1\times \mathbb P^1\times \cdots$. Therefore, the isomorphism classes of all point modules over $\mathcal C$ can be parametrized by a subvariety in $\mathbb P^\infty$, which is called the {\it point variety} of $\mathcal C$. Sometimes, the point variety in $\mathbb P^\infty$ is determined by a variety $X$ in $\mathbb P^r$ for some $r\ge 1$ and an automorphism $\sigma: X\to X$, where the point variety is determined by the natural embedding $X\hookrightarrow \mathbb P^\infty$ via $x\mapsto (x,\sigma(x),\sigma^2(x),\dots)$ for any $x\in X$. In such a case for simplicity, we say the point variety is given by the pair $(X,\sigma)$. 
\end{note}

\begin{thm}\label{PS}
The point variety of $\mathcal C$ is given by the pair $(\mathbb P^1\times \mathbb P^1\times \mathbb P^1,\, {\rm id})$. 
\end{thm}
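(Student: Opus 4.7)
The approach I would take is to translate the three defining relations from Eq.\eqref{relationC} into explicit polynomial identities on consecutive quadruples $(p_n, p_{n+1}, p_{n+2}, p_{n+3})$ of the defining sequence of a point module $M(p_0, p_1, \dots)$. Writing $p_i = (x_i, y_i) \in k^2 \setminus \{(0,0)\}$ and using the $\mathcal{C}$-action from Eq.\eqref{ActionM}, each degree-$4$ monomial in $x, y$ applied to $e_n$ produces a scalar multiple of $e_{n+4}$; equating the image of each relation to zero gives three polynomial equations at position $n$.

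The key computation is that each of these three equations admits the common factor $D_n := x_{n+3}y_n - y_{n+3}x_n$, reducing them to
\[
x_{n+2}x_{n+1}\,D_n = 0, \quad y_{n+2}y_{n+1}\,D_n = 0, \quad (x_{n+2}y_{n+1}+y_{n+2}x_{n+1})\,D_n = 0.
\]
A short case analysis on the vanishing of $x_{n+1}$, using $p_{n+1}, p_{n+2} \ne (0,0)$, rules out the possibility $D_n \ne 0$: for instance if $x_{n+1} = 0$ then $y_{n+1} \ne 0$ forces $y_{n+2} = 0$ from the second equation, and then the third forces $x_{n+2} = 0$, contradicting $p_{n+2} \ne (0,0)$. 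So $D_n = 0$ for every $n \ge 0$, equivalently $p_n = p_{n+3}$ in $\mathbb{P}^1$. Conversely, whenever $D_n = 0$ for all $n$, all three equations hold trivially, so any such sequence really does define a point module.

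Consequently point modules over $\mathcal{C}$ correspond bijectively with sequences in $\mathbb{P}^1$ of period dividing $3$, and such a sequence is determined by its initial triple $(p_0, p_1, p_2) \in \mathbb{P}^1 \times \mathbb{P}^1 \times \mathbb{P}^1$. Under the embedding into $\mathbb{P}^\infty$ given by $(p_0, p_1, p_2) \mapsto (p_0, p_1, p_2, p_0, p_1, p_2, \dots)$, the associated shift on the parameter space is the identity, yielding the pair $(\mathbb{P}^1 \times \mathbb{P}^1 \times \mathbb{P}^1, \mathrm{id})$. The main obstacle is nothing conceptual but rather the careful factorization of the four-term middle relation $x^2y^2 + xyxy - yxyx - y^2x^2$, which is what produces the common factor $D_n$ that controls the whole argument; the outer two relations factor transparently.
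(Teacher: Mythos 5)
Your proof is correct, and it follows the same overall strategy as the paper: apply the three defining relations of $\mathcal C$ to each basis vector $e_n$ of a candidate point module and conclude $p_n = p_{n+3}$. The difference is in how that conclusion is extracted. The paper arranges the three resulting equations as a $3\times 2$ linear system in the unknown pair $(x_{n+3}, y_{n+3})$, observes that $(x_n, y_n)$ is a solution, and then rules out a rank-zero coefficient matrix by a case check to get uniqueness up to scalar. You instead notice that each of the three relations, when applied to $e_n$, factors with the common factor $D_n = x_{n+3}y_n - y_{n+3}x_n$, leaving the residual factors $x_{n+2}x_{n+1}$, $y_{n+2}y_{n+1}$, $x_{n+2}y_{n+1} + y_{n+2}x_{n+1}$; the same kind of short case analysis on the nonvanishing of $p_{n+1}, p_{n+2}$ then forces $D_n = 0$ directly. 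Your factorization $x^2y^2 + xyxy - yxyx - y^2x^2 \mapsto (x_{n+2}y_{n+1}+y_{n+2}x_{n+1})(x_{n+3}y_n - y_{n+3}x_n)$ is verifiably correct, and it makes the converse direction (any $3$-periodic sequence in $\mathbb P^1$ defines a point module) entirely transparent, which the paper leaves implicit. So while the two proofs are logically parallel, the factorization route is somewhat more economical: it avoids setting up the matrix and replaces the ``matrix cannot be identically zero'' claim with the equivalent but more directly checkable statement ``$D_n \ne 0$ is impossible.''
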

\begin{proof}
Let $M(p_0,p_1,\dots)$ be a point module. Taking any relation $f$ in Eq.\eqref{relationC} of $\mathcal C$, we have $fe_0=0$. Therefore we obtain
\begin{equation*}\left\{
\begin{aligned}
x_3x_2x_1y_0-y_3x_2x_1x_0=0\\
x_3x_2y_1y_0+x_3y_2x_1y_0-y_3x_2y_1x_0-y_3y_2x_1x_0=0\\
x_3y_2y_1y_0-y_3y_2y_1x_0=0
\end{aligned}\right.
\end{equation*} 
Consider the above equations as a system of linear equations in terms of $x_3,y_3$ and rewrite it in the matrix form.
\[
\begin{bmatrix}
x_1x_2y_0 & -x_0x_1x_2\\
x_2y_0y_1+x_1y_0y_2& -x_0x_2y_1-x_1x_0y_2\\
y_0y_1y_2 & -x_0y_1y_2
\end{bmatrix}
\begin{bmatrix}
x_3\\y_3
\end{bmatrix}
=0.
\]
It is easy to check that it has a solution $(x_3,y_3)=(x_0,y_0)$. We claim that it is the only solution up to a scalar multiple by showing that the coefficient matrix can not be identically zero. Suppose it is not true, then all the entries appearing in the $3\times 2$ matrix must be zero. Hence 
$$x_1x_2y_0=x_0x_1x_2=x_2y_0y_1+x_1y_0y_2= x_0x_2y_1+x_1x_0y_2=y_0y_1y_2=x_0y_1y_2=0.$$
One can check the equations have no solution.

By repeating the above argument regarding the basis vector $e_i$ for all $i$, we see that $M(p_0,p_1,\dots)$ is well defined if and only if $p_i=p_{i+3}$ for all $i$. This yields our result. 
\end{proof}

\begin{note}
Let $\underline{p}=(p_0,p_1,p_2)$ be a point in $\mathbb P^1\times \mathbb P^1\times \mathbb P^1$. According to Theorem \ref{PS}, the point variety over $\mathcal C$ is given by all $M(\underline{p})$ for some $\underline{p}\in \mathbb P^1\times \mathbb P^1\times \mathbb P^1$. Denote by $\Gamma=\{(p,p,p)\,|\, p\in \mathbb P^1\}\subset \mathbb P^1\times \mathbb P^1\times \mathbb P^1$ the diagonal part. 
\end{note}

\begin{thm}
Let $M(\underline{p})$ be any point module over $\mathcal C$, and $S$ be a simple quotient of $M(\underline{p})$.
\begin{itemize}
\item[(a)] If $\underline{p}\in \Gamma$, then $\dim_k(S)=1$. 
\item[(b)] If $\underline{p}\not\in \Gamma$, then $S$ is either trivial or $\dim_k(S)=3$. 
\end{itemize}
\end{thm}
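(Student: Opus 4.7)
The plan is to translate the existence of a simple quotient $\pi : M(\underline p) \twoheadrightarrow S$ into explicit linear constraints via the action formulas in \eqref{ActionM}, and then apply Theorem \ref{RepC}(b), which restricts $\dim_k S$ to either $1$ or $3$.

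For part (a), suppose $\underline p \in \Gamma$ so that $p_0 = p_1 = p_2 = p$ in $\mathbb P^1$ (recall the point module is automatically $3$-periodic by Theorem \ref{PS}). I would first rescale the basis vectors $\{e_i\}$ of $M(\underline p)$ so that the representatives $(x_i, y_i)$ become a single constant pair. If $p = [1:\lambda]$ one can arrange $(x_i, y_i) = (1, \lambda)$ for all $i$, giving the operator identity $(y - \lambda x) \cdot M = 0$; if $p = [0:1]$ one arranges $(x_i, y_i) = (0, 1)$, giving $x \cdot M = 0$. In either case $M$ factors through $\mathcal C / I$ for an ideal $I$ generated by a single linear form. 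A direct substitution into the three defining relations \eqref{relationC} shows that each one lies in any such ideal: after setting $y = \lambda x$ (respectively $x = 0$) every relation becomes identically zero. Hence $\mathcal C / I \cong k[t]$ is a commutative polynomial ring in one variable, and any simple quotient of the cyclic $k[t]$-module $M$ is one-dimensional.

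For part (b), suppose $\pi : M(\underline p) \twoheadrightarrow S$ is a one-dimensional simple quotient with $S = k \cdot e$, $xe = \alpha e$, $ye = \beta e$. Setting $c_i := \pi(e_i)$ with $c_0 \ne 0$, $\mathcal C$-linearity of $\pi$ applied via \eqref{ActionM} is equivalent to the vector identity
\[
c_{i+1}(x_i, y_i) \;=\; c_i(\alpha, \beta) \quad \text{in } k^2, \qquad i \ge 0.
\]
A short induction on $i$ yields two alternatives. Either $(\alpha, \beta) = (0, 0)$, in which case $c_i = 0$ for all $i \ge 1$ and $S$ is the trivial one-dimensional representation of $\mathcal C$; or $(\alpha, \beta) \ne (0, 0)$, in which case each $c_i \ne 0$ and $[x_i : y_i] = [\alpha : \beta]$ in $\mathbb P^1$ for every $i$, forcing $\underline p \in \Gamma$. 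Under the standing assumption $\underline p \notin \Gamma$ the second alternative is excluded, so the only one-dimensional simple quotient is the trivial one. Combining with Theorem \ref{RepC}(b), which forbids two-dimensional simples, the dichotomy in (b) follows.

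The only substantive step is the algebraic observation used in (a): despite being quartic and involving both variables, each of the three defining relations in \eqref{relationC} lies in the two-sided ideal generated by any linear form $y - \lambda x$ (and in $(x)$). This is what makes the quotient $\mathcal C / (y - \lambda x)$ collapse to the commutative ring $k[x]$, and it is the one place where the particular shape of the binary cubic Clifford relations is genuinely used; everything else is elementary bookkeeping with the point-module action \eqref{ActionM}.
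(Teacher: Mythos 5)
Your proof is correct, but it takes a genuinely different and more elementary route than the paper's.

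For part (a), the paper computes the action of the explicit central elements $z_0,\dots,z_5$ (Eq.\ \eqref{Z}) on the point module $M$, observes that the generators of the ideal cutting out $\mathcal S_\mathcal C$ all annihilate $M$, and then invokes Corollary \ref{SC} together with Theorem \ref{RepC}(c)--(d) to conclude $\dim_k S = 1$. Your argument bypasses the center entirely: after a rescaling of the basis $\{e_i\}$ you observe that $M$ factors through $\mathcal C/(y-\lambda x)$ (or $\mathcal C/(x)$), and you verify directly that each of the three defining relations \eqref{relationC} dies modulo the linear form, so the quotient collapses to $k[x]$; simples over a cyclic $k[x]$-module are one-dimensional. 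This is cleaner and uses only the shape of the relations, not the computed center.

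For part (b), the paper again goes through the central elements: it computes the action of $z_0z_3-z_1z_2$, $z_0z_2-z_1^2$, $z_1z_3-z_2^2$ on $M$, sets up the $3\times 3$ linear system \eqref{abc}, and shows via a determinant identity $\gamma^2 = XYZ$ that there is no nonzero solution when $\underline p \notin \Gamma$, hence no one-dimensional nontrivial quotient. Your argument is a short direct computation: from $c_{i+1}(x_i,y_i) = c_i(\alpha,\beta)$ you get the dichotomy that either $(\alpha,\beta)=(0,0)$ (trivial) or all $p_i$ equal $[\alpha:\beta]$ (forcing $\underline p \in \Gamma$). Both proofs then close with Theorem \ref{RepC}(b) to exclude dimension two. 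Your route is substantially shorter, requires no knowledge of the explicit center generators, and makes the geometric content transparent; the paper's approach has the side benefit of locating the resulting central annihilator inside $\mathcal S_\mathcal C$ explicitly, which ties into the Azumaya-locus picture developed earlier in the paper. One small point worth making explicit in your write-up: the rescaling in (a) works because replacing $e_i$ by $\lambda_i e_i$ rescales $(x_i,y_i)$ by $\lambda_i/\lambda_{i+1}$, and a suitable choice of the $\lambda_i$ normalizes all representatives to a single constant pair.
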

\begin{proof}
Let $M=M(p_0,p_1,p_2)=\bigoplus_{i\ge 0}k\, e_i$. We have $\mathcal C$ acts on $M$ as in \eqref{ActionM}.

(a) Suppose $p_0=p_1=p_2=(a,b)\in \mathbb P^1$. It is direct to check that the central elements $\{z_i\}_{0\le i\le 5}$ in Eq.\eqref{Z} act on $M$ as follows.
\begin{align*}
z_0e_i=a^3e_{i+3},\ z_1e_i=a^2be_{i+3},\ z_2e_i=ab^2e_{i+3},\ z_3e_i=b^3e_{i+3},\ z_4e_i=z_5e_i=0.
\end{align*} 
Hence $(z_0z_3-z_1z_2,z_0z_2-z_1^2,z_1z_3-z_2^2,z_4,z_5)\subset {\rm Ann}_\mathcal C(M)\subset {\rm Ann}_\mathcal C(S)$. By Theorem \ref{RepC} and Corollary \ref{SC}, we have $\mathbb V({\rm Ann}_\mathcal C(S)\cap Z)\in \mathcal S_\mathcal C$. Hence $\dim_k S=1$.

(b) According to Eq.\eqref{Z}, we obtain the following actions on $M$.
\begin{align*}
(z_0z_3-z_1z_2)e_i=-\frac{a}{9}e_{i+6},\ (z_0z_2-z_1^2)e_i=-\frac{b}{18}e_{i+6},\ (z_1z_3-z_2^2)e_i=-\frac{c}{18}e_{i+6},
\end{align*}
where the coefficients $a,b,c$ are given by 
\begin{align*}
a&\,=x_2y_2(x_1y_0-x_0y_1)^2+x_1y_1(x_2y_0-x_0y_2)^2+x_0y_0(x_2y_1-x_1y_2)^2\\
b&\,=x_0^2(x_2y_1-x_1y_2)^2+x_1^2(x_0y_2-x_2y_0)^2+x_2^2(x_1y_0-x_0y_1)^2\\
c&\,=y_0^2(x_2y_1-x_1y_2)^2+y_1^2(x_0y_2-x_2y_0)^2+y_2^2(x_1y_0-x_0y_1)^2.
\end{align*}
Here we write $p_i=(x_i,y_i)\in \mathbb P^1$ for all $i\ge 0$. Now suppose $\dim_kS=1$ and it is not trivial. By Theorem \ref{RepC} and Corollary \ref{SC}, we have $(z_0z_3-z_1z_2,z_0z_2-z_1^2,z_1z_3-z_2^2)\subset {\rm Ann}_\mathcal C(S)$. If any of $a,b,c$ is not zero, it implies that $e_{i+6}=0$ in $S$ for all $i$. Thus $xe_5=ye_5=0$ in $S$. Hence $e_5=0$ in $S$ otherwise $S=ke_5$ would be trivial. Repeating the argument, we obtain all $e_i=0$ in $S$. It is absurd. So $a=b=c=0$. Now set the variables 
$$X:=(x_1y_0-x_0y_1)^2,\ Y:=(x_2y_0-x_0y_2)^2,\ Z:=(x_2y_1-x_1y_2)^2.$$
We write the above conditions as a system of linear equations in terms of $X,Y,Z$ as follows.
\begin{align}\label{abc}
\begin{bmatrix}
x_2y_2 & x_1y_1 & x_0y_0\\
x_2^2    & x_1^2&x_0^2\\
y_2^2&y_1^2&y_0^2
\end{bmatrix}
\begin{bmatrix}
X\\Y\\Z
\end{bmatrix}=0.
\end{align}
We will show that Eq.\eqref{abc} has no solution whenever $\underline{p}\not\in \Gamma$, which yields that $\dim_k S>1$ if $S$ is not trivial. Hence part (b) follows from Theorem \ref{RepC}. Note that $X=0$ implies that $p_0=p_1$, $Y=0$ implies that $p_0=p_2$, and $Z=0$ implies that $p_1=p_2$. Since $\underline{p}\not\in \Gamma$, $(X,Y, Z)$ is a nonzero solution for Eq.\eqref{abc}. So
\[
0={\rm det}
\begin{pmatrix}
x_2y_2 & x_1y_1 & x_0y_0\\
x_2^2    & x_1^2&x_0^2\\
y_2^2&y_1^2&y_0^2
\end{pmatrix}:=\gamma.
\]
Direct computation shows that $\gamma^2=XYZ$. So $\mathbb V(\gamma)=\mathbb V(X)\,\cup\, \mathbb V(Y)\,\cup\, \mathbb V(Z)$. Without loss of generality, we assume that $\underline{p}\in \mathbb V(\gamma)\cap \mathbb V(X)$. Since $X=0$, we know $(x_0,y_0)=(x_1,y_1)\in \mathbb P^1$. Moreover, we can check that $Y=Z\neq 0$. But in such a case, $(X,Y,Z)$ does not satisfy Eq.\eqref{abc}. We obtain a contradiction, which proves our result. 
\end{proof}

\section{Discriminant ideals of $\mathcal C$}
\begin{note}
Discriminant ideals of PI algebras play an important role in the study of a maximal orders \cite{Reiner}, the automorphism and isomorphism problems for noncommutative algebras \cite{CPWZ}, the Zariski cancellation problem for noncommutative algebras \cite{BZ}, and the description of dimensions of irreducible representations \cite{BY}.
\end{note}

\begin{note}
Let $A$ be an algebra and $C\subseteq Z(A)$ be a central subalgebra. A \emph{trace map} on $A$ is a nonzero map $\text{tr}: A\to C$ that is cyclic ($\text{tr}(xy)=\text{tr}(yx)$ for $x,y\in A$) and $C$-linear. For a positive integer $\ell$, the \emph{$\ell$-th discriminant ideal} $D_\ell(A/C)$ and the \emph{$\ell$-th modified discriminant ideal} ${MD}_\ell(A/C)$ of $A$ over $C$ are the ideals of $C$ with generating sets
$$\left\{\text{det}([\text{tr}(y_iy_j)]_{i,j=1}^\ell)\,|\, y_1,\dots,y_\ell \in A\right\}$$ and
$$\left\{\text{det}([\text{tr}(y_iy_j')]_{i,j=1}^\ell)\,|\, y_1,y_1',\dots,y_\ell,y_\ell' \in A\right\}.$$
\end{note}

\begin{note}
Since we know the binary cubic generic Clifford algebra $\mathcal C$ is Auslander-regular, Cohen-Macaulay, and stably-free (it is connected graded), we can employ Stafford's work \cite[Theorem 2.10]{Staff} to conclude that $\mathcal C$ is a maximal order in a central simple algebra and admits the reduced trace map $\text{tr}: \mathcal C\to Z$ (e.g. \cite[Section 9]{Reiner}). The next theorem describes the zero sets of the discriminant ideals of $\mathcal C$ which has PI degree three.
\end{note}

\begin{thm} Let $\mathcal C$ be the binary cubic generic Clifford algebra of PI degree three with reduced trace map $\text{tr}: \mathcal C\to Z$. For all positive integers $\ell$, the zero sets of the $\ell$-th discriminant and $\ell$-th modified discriminant ideals of $\mathcal C$ over its center $Z$ coincide,
$\mathbb V(D_\ell(\mathcal C/Z),{\rm tr})=\mathbb V(MD_\ell(\mathcal C/Z),{\rm tr})$;
denote this set by $\mathbb V_\ell \subseteq Y:= {\rm maxSpec}(Z)$.  The following hold:
\begin{enumerate}
\item $\mathbb V_\ell=\emptyset$ for $\ell\le 3$.
\item $\mathbb V_\ell=Y^{sing}={\mathcal S}_\mathcal C$ for $4\le \ell \le 9$.
\item $\mathbb V_\ell=Y$ for $\ell>9$.
\end{enumerate}
\end{thm}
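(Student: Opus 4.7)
The plan is to apply the general formula of Brown--Yakimov \cite{BY} identifying the zero sets of discriminant ideals (ordinary and modified) with a locus defined by dimensions of irreducible representations. For a prime affine noetherian PI algebra $A$ that is a maximal order in a central simple algebra and admits the reduced trace $\text{tr}: A \to Z$, that result gives
\[
\mathbb{V}(D_\ell(A/Z), \text{tr}) = \mathbb{V}(MD_\ell(A/Z), \text{tr}) = \left\{\mathfrak{m} \in {\rm maxSpec}(Z) \,:\, \sum_{V \in \chi^{-1}(\mathfrak{m})} \dim_k(V)^2 < \ell \right\}.
\]
All the needed hypotheses on $\mathcal C$ are recorded in the note preceding the theorem (Stafford's theorem supplies the maximal-order property and the reduced trace), so the first step is to invoke this formula and reduce the problem to computing, for every $\mathfrak{m} \in Y$, the integer $N(\mathfrak{m}):=\sum_{V \in \chi^{-1}(\mathfrak{m})} \dim_k(V)^2$.

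The second step reads off $N(\mathfrak{m})$ from Theorem \ref{RepC}. On the smooth locus $Y\setminus \mathcal S_\mathcal C$, part (d) combined with (b) gives a unique irreducible in the fibre, of dimension equal to ${\rm PI}\text{-}{\rm deg}(\mathcal C)=3$, hence $N(\mathfrak{m})=9$. On the singular locus $\mathcal S_\mathcal C$ (the affine twisted cubic of Corollary \ref{SC}), part (c) together with (b) gives exactly three one-dimensional irreducibles in the fibre, with no three-dimensional ones since by (d) the latter map bijectively to the complement of $\mathcal S_\mathcal C$; hence $N(\mathfrak{m})=3$.

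The three cases of the theorem then follow by inspecting the inequality $N(\mathfrak{m})<\ell$. If $\ell\le 3$, neither value $3$ nor $9$ is strictly less than $\ell$, so $\mathbb{V}_\ell=\emptyset$. If $4\le \ell\le 9$, then $3<\ell$ but $9\not<\ell$, so $\mathbb{V}_\ell=\mathcal S_\mathcal C = Y^{sing}$. Finally, if $\ell>9$, both $3,9<\ell$, so $\mathbb{V}_\ell=Y$. The only substantive input is the Brown--Yakimov identification; once that is in place, the representation-theoretic data from Theorem \ref{RepC} reduces the rest of the argument to arithmetic. The main subtlety to verify is that $\mathcal C$ genuinely meets the maximal-order and reduced-trace hypotheses of \cite{BY}, which is precisely what the preceding note establishes.
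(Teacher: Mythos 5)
Your proposal is correct and follows essentially the same route as the paper: both invoke the Brown--Yakimov identification $\mathbb V(D_\ell)=\mathbb V(MD_\ell)=\{\mathfrak m : \sum_{V\in\chi^{-1}(\mathfrak m)}(\dim_k V)^2<\ell\}$ (justified via Stafford's maximal-order result and the reduced trace), and then read off the fibre data from Theorem \ref{RepC}. The paper states the last step as ``follows directly,'' whereas you spell out the arithmetic $N(\mathfrak m)\in\{3,9\}$; this is a welcome elaboration but not a different argument.
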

\begin{proof}
For any $\mathfrak m\in \text{maxSpec}(Z)$, denote by $\text{Irr}_\mathfrak m(\mathcal C)$ the set of isomorphism classes of irreducible representations of $\mathcal C$ with central annihilator $\mathfrak m$.
By \cite[Main Theorem (a),(e)]{BY}, we have
$$\mathbb V(D_\ell(\mathcal C/Z),\text{tr})= \mathbb V(MD_\ell(\mathcal C/Z),\text{tr})=\left\{\mathfrak m\in \text{maxSpec}(Z)\,\left |\, \sum_{V\in \text{Irr}_\mathfrak m(\mathcal C)} (\dim_k V)^2< \ell\right\}\right..$$
Hence, our result follows directly from Theorem \ref{RepC}.
\end{proof}

\end{document}